\documentclass[12pt,reqno]{amsart} % Specifies the document style.
\usepackage{amsmath} % symbols like equation*
\usepackage{amssymb} % symbols like mathbb
\usepackage{amsthm} % \proof
\usepackage{mathtools} % symbols like norm
\usepackage{enumitem} % change item label 

\usepackage{hyperref}
\hypersetup{citebordercolor={1 1 1}, pdfborder={0 0 0}}

\usepackage{color}

\newcommand{\sub}{\subseteq}

\newcommand{\R}{\mathbb{R}}

\newcommand{\eps}{\varepsilon}
\newcommand{\diam}{\mathrm{diam}}

\newtheorem{thm}{Theorem}[section]
\newtheorem{prop}{Proposition}[section]

\newtheorem{lem}{Lemma}[section]

\newtheorem{cor}{Corollary}[section]

\newtheorem{rmk}{Remark}[section]

\DeclarePairedDelimiter{\norm}{\lVert}{\rVert}

\makeatletter
\let\oldnorm\norm
\def\norm{\@ifstar{\oldnorm}{\oldnorm*}}

\makeatother

\begin{document}

\title[Level set estimates]
{Level set estimates for strictly convex and hyperbolic functions}

\thanks{\textit{Keywords}: level set estimate, convex function, hyperbolic, oscillatory integral}

\author{Xianghong Chen}

\address[Xianghong Chen]{Department of Mathematics, Sun Yat-sen University, Guangzhou, Guangdong 510275, P.R. China}
\email{chenxiangh@mail.sysu.edu.cn}

\author{Tongou Yang}
%\thanks{$*$ Corresponding author}

\address[Tongou Yang]{Department of Mathematics\\
Southern University of Science and Technology\\
Shenzhen, Guangdong 518055, P.R. China}
\email{yangto@sustech.edu.cn}

%%%%%%%%%%%%%%%%%%%%%%%%%%

\begin{abstract}
In this note, we prove uniform upper bounds for the volume of the level set 
$$\{x\in\Omega: c\le f(x)<c+\delta\},\quad c\in\mathbb R,\;\delta>0,$$ 
for strictly convex and hyperbolic functions $f$ defined on a convex domain $\Omega\subset\mathbb{R}^n$ ($n\ge 2$). 
In particular, under a Hessian lower bound $D^2f\ge I_n$, we obtain the sharp volume bound 
$$|S^{n-1}|\, r(\Omega)^{n-2}\,\delta,$$ 
where $r(\Omega)=\frac12\,{\operatorname{diam}\Omega}$.  
As an application, we derive $L^2$ estimates for oscillatory integrals with convex phases.
\end{abstract}

\subjclass[2020]{52A38, 42B20}

\maketitle

\tableofcontents
%%%%%%%%%%%%%%%%%%%%%%%%%%%%%%%%%%%%%%%%%%%%%%%%%%%%

\section{Introduction}
Given a function $f: \Omega\rightarrow\mathbb R$ defined on a convex domain $\Omega\subset\mathbb{R}^n$, the quantity $|\{x\in\Omega: c\le f(x)<c+\delta\}|$ measures the volume of points where $f$ lies within a small range of values. Understanding how this volume depends on the regularity and curvature properties of $f$ has been the subject of extensive investigation (\textit{cf.} \cite{CCW1999, PST2001, CarberyWright2002, Rogers2005, Carbery2010, Greenblatt2010, Gressman2011, Gressman2013, CarberyMMR2014, Gilula2018, Green2020arxiv, Green2021arxiv, Green2021, Steinerberger2021, Green2022thesis, BGZZ2026, PanShaoWangWu2026arxiv}, and references therein), where refined estimates under various curvature conditions arise from the study of Fourier transforms and oscillatory integrals. 

The present note contributes to this rich literature by proving sharp uniform level set estimates under two types of geometric assumptions: a uniform lower bound on the Hessian matrix $D^2f$, and uniform upper and lower bounds on $(D^2f)^2$.

The earliest level set estimates in harmonic analysis often exploited the fact that a function with a large derivative cannot remain near a constant value over a large set. This intuition is formalised in Lemma \ref{lem:gradient_lower_bound}, where, under the assumption that $|D_\nu f(x)|\ge a>0$ for a \textit{fixed} direction $\nu$, one obtains the bound
\[
|\{x\in\Omega: c\le f(x)<c+\delta\}|\le |B^{n-1}(0,r(\Omega))|\,\frac{\delta}{a},
\]
where $$r(\Omega):=\frac12\, {\operatorname{diam}\Omega}.$$
Such gradient‑based estimates have long been used in the theory of oscillatory integrals, where they serve as a simple tool to bound the decay of Fourier transforms via integration by parts. However, in many applications, the phase function may have a critical point, rendering the gradient lower bound inapplicable. This motivates the need for estimates that rely on second‑order information. \\[-0.5em]

Our first main result, Theorem \ref{thm:elliptic}, states that if $f$ is a strictly convex function that satisfies 
\[
f(x)-f(x_0)\ge Df(x_0)\cdot (x-x_0)+\frac{a}{2}|x-x_0|^2
\]
for all $x_0,x\in\Omega\sub \R^n$ $(n\ge 2)$, then
\begin{equation}
\label{intro:delta-bound}
|\{x\in\Omega: c\le f(x)<c+\delta\}|\le |S^{n-1}|\,r(\Omega)^{n-2}\,\frac{\delta}{a}.
\end{equation}
In particular, this bound holds for any twice-differentiable function $f$ that satisfies $D^2f\ge a I_n$. The constant factor in \eqref{intro:delta-bound} is optimal, as can be seen from the case $\Omega=B^n(0,1)$, $f(x)=\frac1 2|x|^2$, $c=\frac12-\delta$, $\delta\to 0$. \\[-0.5em]

Our second main result, Theorem \ref{thm:hyperbolic}, extends Theorem \ref{thm:elliptic} to functions $f\in C^2(\Omega)$ that satisfy $(D^2f)^2\ge a^2 I_n$. When $n=2$, the bound obtained is of the form $C\frac{\delta |\log \delta |}{a}$, and when $n\ge3$, the bound is of the form $C\frac{\delta}{a}$, where $C$ is a constant that depends on $f$ (which can be made explicit up to dimensional constant) but is independent of $\delta$. 
%These constants $C$ are also made explicit up to dimensional constants. 
When $n=2$, the factor $|\log\delta|$ is necessary, as can be seen from the case $f(x)=x_1^2-x_2^2, c=0, \delta\rightarrow0$.\\[-0.5em]  

The connection between level set estimates and oscillatory integrals is explored in Corollary \ref{cor:oscillatory}, which derives the $L^2$-estimate
\[
\Bigl(\int_{\mathbb{R}} |I_{\psi}(\lambda)|^2 d\lambda\Bigr)^{1/2}\le \frac{C}{a}\,\|\psi\|_{L^2(\mathbb{R})},
\]
for $I_{\psi}(\lambda)=\int_{\Omega} e^{i\lambda f(x)}\psi(f(x))\,dx$, as a direct consequence of Plancherel's theorem and the level set estimate of Theorem \ref{thm:elliptic}. 

%%%%%%%%%%%%%%%%%%%%%%%%%%

\subsection{Notation}
\begin{itemize}
\item For $n\ge 1$, denote $B^n(x_0,r)=\left\{x \in \mathbb{R}^n:|x-x_0|<r\right\}$ and $S^{n-1}=\left\{x \in \mathbb{R}^n:|x|=1\right\}$.  
\item For $E\subset\R^n$, denote by $|E|$ the $\alpha$-dimensional Hausdorff measure of $E$, where $\alpha=\dim_HE$ is the Hausdorff dimension of $E$ (when $\alpha=n$, $|E|$ agrees with the Lebesgue measure). We adopt the convention $|B^0(x_0,r)|:=1$. 
\item A subset $\Omega\subset\R^n$ is called \textit{convex} if for all $x, y\in\Omega$ and $t\in (0,1)$, we have $(1-t)x+t y\in \Omega$. An open convex subset $\Omega\subset\R^n$ is called a \textit{convex domain}. Denote the \textit{diameter} of $\Omega$ by 
$$\diam\,\Omega=\sup_{x,y\in\Omega}|x-y|.$$
\item Let $\Omega\subset\R^n$ be a convex domain. A function $f$ on $\Omega$ is said to be \textit{convex} if for all $x, y\in\Omega$ and $t\in (0,1)$, 
\begin{equation*}
    f((1-t) x+t y)\le (1-t)f(x)+t f(y).
\end{equation*} 
$f$ is said to be \textit{strictly convex} if the inequality is strict whenever $x\neq y$. 
A vector $Df(x_0)\in\R^n$ is called a \textit{subgradient} of $f$ at $x_0\in\Omega$, if there exists a neighbourhood $U$ of $x_0$ such that for every $x\in U$, $f(x)-f(x_0)\ge Df(x_0)\cdot (x-x_0).$
\item  We use $I_n$ to denote the $n\times n$ identity matrix. For an $m\times n$ matrix $A$, let $|A|$ denote its $\ell^2\rightarrow\ell^2$ operator norm. For a matrix-valued function $A: \Omega\rightarrow\R^{m\times n}$, denote 
$$\|A\|_\infty=\sup_{x\in\Omega}|A(x)|.$$
\item We write  $a\lesssim b$ or $a=O(b)$ to indicate that there exists a constant  $C=C(n)>0$ such that $a\leq Cb$. We write $a \sim b$ to mean that both $a\lesssim b$ and $b\lesssim a$ hold. 

\item For a function $f:\Omega\sub \R^n\to \R$, we denote 
\begin{equation*}
    \|f\|_{\mathrm{Lip}}:=\sup_{x\ne y\in \Omega}\frac{|f(x)-f(y)|}{|x-y|}.
\end{equation*}
\end{itemize}

%%%%%%%%%%%%%%%%%%%%%%%%%%

\subsection{Acknowledgements}
X. C. was supported in part by 
the NNSF of China (Grant Nos. 12371105, 12426204). 
T. Y. was supported in part by 
the National Key R\&D Program of China (No. 2024YFA1015400).

\subsection{AI disclosure} No AI system was used in the conception, proof, or writing of the mathematical content of this paper, with the only exception of the mollification argument used to prove Proposition \ref{prop:mollification} suggested by Deepseek V4. ChatGPT 6-Sol was only used for the preparation of the literature review for this article.

%%%%%%%%%%%%%%%%%%%%%%%%%%%%%%%%%%%%%%%%%%%%%%%%%%%%

\section{Level set estimate via gradient lower bound}
\label{sec:gradient}
In this section, we derive level set estimates for functions whose gradient is bounded below. These estimates will be used in Section \ref{sec:nondegenerate} to derive level set estimates for functions whose Hessian is nondegenerate. \\[-0.5em]

We begin with a simple lemma. 

\begin{lem}\label{lem:gradient_lower_bound}
Let $\Omega \subset \mathbb{R}^n$ $(n\ge 1)$ be a bounded convex domain, and let $f: \Omega\rightarrow\R$ be a differentiable function. Suppose there exist $a>0$ and $\nu \in S^{n-1}$ such that
$$
\left| D_\nu f(x)\right| \ge a, \quad  \forall x \in \Omega.$$
Then for all $c\in\mathbb R$ and $\delta>0$, 
\begin{equation}
\label{eq:D_nu}
|\{x\in\Omega: c\le f(x)<c+\delta\}|\le |B^{n-1}(0,r(\Omega))|\,\frac{\delta}{a}.
\end{equation}
\end{lem}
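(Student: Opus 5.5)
We plan to use Fubini along the fixed direction $\nu$ and then bound the projection of $\Omega$. After a rotation we may assume $\nu=e_n$, and we write $x=(x',t)$ with $x'\in\R^{n-1}$ and $t\in\R$. Let $P\Omega\subset\R^{n-1}$ be the orthogonal projection of $\Omega$ onto $\nu^\perp$. For each $x'\in P\Omega$, the fibre $\Omega_{x'}=\{t:(x',t)\in\Omega\}$ is an open interval, since $\Omega$ is convex and open. On this interval the one-variable function $g(t)=f(x',t)$ is differentiable with $|g'(t)|\ge a>0$.

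\textbf{One-dimensional step.} By Darboux's theorem, derivatives have the intermediate value property. Since $g'$ never vanishes on the interval, it has a constant sign, so $g$ is strictly monotone. Take $s<t$ in the fibre. The mean value theorem gives $|g(t)-g(s)|\ge a|t-s|$. Hence the set $\{t\in\Omega_{x'}: c\le g(t)<c+\delta\}$ is an interval of length at most $\delta/a$. This also handles the case $n=1$, where the claim reads $|\{\cdots\}|\le \delta/a$ under the convention $|B^0|=1$.

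\textbf{Fubini.} The level set is measurable because $f$ is continuous. By Fubini,
\[
|\{x\in\Omega: c\le f(x)<c+\delta\}|=\int_{P\Omega}|\{t\in\Omega_{x'}: c\le g(t)<c+\delta\}|\,dx'\le |P\Omega|\,\frac{\delta}{a}.
\]

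\textbf{Bounding the projection (main obstacle).} It remains to show $|P\Omega|\le |B^{n-1}(0,r(\Omega))|$. The projection has $\diam P\Omega\le\diam\Omega=2r(\Omega)$, because orthogonal projection is $1$-Lipschitz. We then invoke the isodiametric inequality in $\R^{n-1}$: a set of diameter $d$ has Lebesgue measure at most that of a ball of radius $d/2$. This gives the desired bound and completes the proof.

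The only nontrivial ingredient is this appeal to the isodiametric inequality. A cruder argument, placing $P\Omega$ inside a ball of radius $\diam\Omega$, would lose a factor $2^{n-1}$. So the step where we should cite the isodiametric inequality carefully (e.g.\ via Steiner symmetrization, as in Evans–Gariepy) is the key point for the sharp constant. The fact that $\Omega$ is bounded guarantees the diameter is finite.
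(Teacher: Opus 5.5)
Your proposal is correct and follows the paper's proof essentially step for step. You rotate so that $\nu=e_n$, use convexity to get interval fibres, bound each fibre's contribution by $\delta/a$ with the one-dimensional mean value argument, integrate over the projection via Tonelli, and finish with the isodiametric inequality for the projection, whose diameter is at most $\diam\,\Omega$. The appeal to Darboux's theorem is harmless but unnecessary: the mean value theorem alone shows that any two points of a fibre's level set are closer than $\delta/a$.
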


\begin{proof}[Proof]
The case $n=1$ is straightforward and follows directly from the Mean Value Theorem. 
We therefore assume $n\ge 2$. By rotation, we may take $\nu=(0, \cdots, 0,1)$. Let 
$$
\Pi_{n-1}:\; \R^n \longrightarrow \R^{n-1},\; 
(x_1,\cdots,x_{n-1},x_n)  \longmapsto (x_1,\cdots,x_{n-1})
$$
be the orthogonal projection onto the first $n-1$ coordinates, and let $\Omega_{n-1}=\Pi_{n-1}(\Omega)$. Clearly, $\Omega_{n-1}$ is convex and satisfies 
\begin{equation}\label{eq:gradient-1}
\diam\, \Omega_{n-1}\le \diam \,\Omega. 
\end{equation}
Applying Tonelli's theorem, we can write 
$$
\begin{aligned}
&|\{x \in \Omega: c \le f(x)<c+\delta\}|\\
&=\int_{\Omega_{n-1}} \left|\left\{x_n \in \mathbb{R}:\left(x^{\prime}, x_n\right) \in \Omega,\, c \le f\left(x^{\prime}, x_n\right)<c+\delta\right\} \right| d x^\prime. 
\end{aligned}
$$
Since $\Omega$ is convex and $|D_\nu f|=\big|\frac{\partial f}{\partial x_n}\big| \ge a$, it follows from the case $n=1$ that the integrand is uniformly bounded by $\frac{\delta}{a}$. After integrating in $x^\prime$, we obtain 
\begin{equation}\label{eq:gradient-2}
|\{x \in \Omega: c \le f(x)<c+\delta\}|\le |\Omega_{n-1}|\,\frac{\delta}{a}.
\end{equation}
By the \textit{isodiametric inequality} (\textit{cf.} \cite[Theorem 8.8]{Gruber}), we have 
$$|\Omega_{n-1}|
\le |B^{n-1}(0,r(\Omega_{n-1}))|.$$
Combining this with \eqref{eq:gradient-1} and \eqref{eq:gradient-2}, the bound \eqref{eq:D_nu} follows. 
\end{proof}

\begin{rmk}
The constant factor in \eqref{eq:D_nu} is optimal, as can be seen from the case $\Omega=B^{n-1}(0,1)\times (0,\delta)$, $f(x)=x_n$, $c=0$, $\delta\to 0$. 
\end{rmk}

Using Lemma \ref{lem:gradient_lower_bound} and a suitable partition of the domain, we obtain the following. 

\begin{prop}\label{prop:gradient_lower_bound}
Let $\Omega \subset \mathbb{R}^n$ $(n\ge 2)$ be a bounded convex domain. Suppose $f\in C^1(\Omega)$ satisfies: \\
(i) $\left| D f(x) \right| \ge a$, for all $x \in \Omega$; \\
(ii) there exists $\eps_0\in(0,\diam\,\Omega]$ such that for all $x,y\in\Omega$ with $|x-y|<\eps_0$,
$$|Df(x)-Df(y)|<\frac{a}{2n}.$$
Then for all $c\in\mathbb R$ and $\delta>0$, 
\begin{equation}\label{eqn:gradient_bounded_below_general}
    |\{x \in \Omega: c \le f(x)<c+\delta\}|\lesssim\eps_0^{-1}(\diam\,\Omega)^{n}\,\frac\delta a.
\end{equation}
All implicit constants here depend only on $n$.
\end{prop}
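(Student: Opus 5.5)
We partition $\Omega$ into small convex pieces on which the gradient direction is essentially frozen, and then apply Lemma \ref{lem:gradient_lower_bound} on each piece. Concretely, cover $\R^n$ by the grid of closed cubes of side length $s=\eps_0/(2\sqrt n)$. Each cube $Q$ then has diameter $\eps_0/2<\eps_0$. Set $\Omega_Q=\Omega\cap \mathrm{int}\,Q$. These sets are bounded convex domains (possibly empty), they are pairwise disjoint, and together they cover $\Omega$ up to a null set.

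Since $\Omega$ has diameter $d:=\diam\,\Omega$, it lies inside a cube of side $d$. Hence the number of grid cubes meeting $\Omega$ is at most $(d/s+2)^n\lesssim (d/\eps_0)^n$, where we used $\eps_0\le d$.

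Next we fix one nonempty piece $\Omega_Q$ and pick a point $x_Q\in\Omega_Q$. Let $\nu_Q=Df(x_Q)/|Df(x_Q)|$, which is well defined because $|Df(x_Q)|\ge a$ by (i). For any $x\in\Omega_Q$ we have $|x-x_Q|<\eps_0$, so (ii) applies and gives
$$D_{\nu_Q}f(x)\ge D_{\nu_Q}f(x_Q)-|Df(x)-Df(x_Q)|\ge a-\frac{a}{2n}\ge \frac a2 .$$
This is the key step: it converts the lower bound on the full gradient into a lower bound on a single fixed directional derivative, uniformly on the piece. The margin $a/(2n)$ in (ii) is more than enough here; even $a/2$ would suffice.

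Lemma \ref{lem:gradient_lower_bound} now applies on $\Omega_Q$ with constant $a/2$ and gives
$$|\{x\in\Omega_Q: c\le f(x)<c+\delta\}|\le |B^{n-1}(0,r(\Omega_Q))|\,\frac{2\delta}{a}\lesssim \eps_0^{\,n-1}\frac{\delta}{a},$$
because $r(\Omega_Q)\le \eps_0/4$.

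Finally we sum over the $\lesssim (d/\eps_0)^n$ pieces. This gives the total bound
$$\lesssim (d/\eps_0)^n\,\eps_0^{\,n-1}\,\frac\delta a=\eps_0^{-1}d^n\,\frac\delta a,$$
which is \eqref{eqn:gradient_bounded_below_general}.

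There is no real obstacle in this argument. The main points of care are:
\begin{itemize}
\item checking that each piece $\Omega_Q$ is convex and open, so that Lemma \ref{lem:gradient_lower_bound} applies (note that boundedness is inherited from $\Omega$);
\item ignoring the null set of cube boundaries;
\item using $\eps_0\le\diam\,\Omega$ to absorb the "$+2$" in the count of cubes.
\end{itemize}
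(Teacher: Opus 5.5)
Your proof is correct and follows the paper's argument: cover $\Omega$ by $O\big((\diam\,\Omega/\eps_0)^n\big)$ cubes of diameter at most $\eps_0$, use (ii) to freeze a single direction $\nu$ with $|D_\nu f|\gtrsim a$ on each convex piece $Q\cap\Omega$, apply Lemma \ref{lem:gradient_lower_bound} to get $\lesssim \eps_0^{n-1}\delta/a$ per piece, and sum. The only difference is that you freeze the normalized gradient direction at a base point, whereas the paper uses a coordinate direction $e_i$ with $|f_{x_i}|\ge a/n$. Your choice gives the lower bound $a/2$ instead of $a/(2n)$ and shows the factor $2n$ in (ii) could be relaxed, but the structure of the argument is the same.
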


\begin{proof}
Since $|D f(x)| \ge a$ for each $x$, there exists some $1\le i\le n$ such that $|f_{x_i}(x)|\ge a/n$. By our choice of $\eps_0$, we can cover $\Omega$ by at most $(\eps_0^{-1}\sqrt n\,\diam\,\Omega)^n$ cubes $Q_0$ of diameter $\eps_0$, such that on each $Q_0$ there is some $i=i(Q_0)$ such that $|f_{x_i}(x)|\ge a/(2n)$ for $x\in Q_0\cap \Omega$. Then \eqref{eqn:gradient_bounded_below_general} follows from applying Lemma \ref{lem:gradient_lower_bound} to each convex subset $Q_0\cap \Omega$. 
\end{proof}

\begin{rmk}
In Proposition \ref{prop:gradient_lower_bound}, if $a=1$ and $Df\in\mathrm{Lip}(\Omega)$, then one can take $\eps_0\sim \min\{\|Df\|_{\mathrm{Lip}}^{-1},$
$\diam\,\Omega\}$ and obtain
\begin{equation*}%\label{eqn:26-05-30}
    |\{x \in \Omega: c \le f(x)<c+\delta\}|\lesssim \big(\|Df\|_{\mathrm{Lip}}+(\diam\,\Omega)^{-1}\big)(\diam\,\Omega)^{n}\,\delta.
\end{equation*}
\end{rmk}

\begin{rmk}
(i) The factor $\varepsilon_0^{-1}$ in \eqref{eqn:gradient_bounded_below_general} cannot be removed under the assumptions of Proposition \ref{prop:gradient_lower_bound}. To see this, take $\Omega=(0,2\pi)^2\subset\mathbb R^2$ and consider
\[
f_N(x,y)=\frac{\sin(Nx)}{N}+y\,\varphi(\sin(Nx)),\quad (x,y)\in\Omega,
\]
where $\varphi(s)=\emph{sgn}(s)\cdot (|s|-\tfrac12)_+^{\,2}$ and $N\to\infty$. A direct verification shows that Proposition \ref{prop:gradient_lower_bound} applies with $a\sim 1$; moreover, taking $c=0$ and $\delta=1/N$, we have 
\[
\left|\{(x,y)\in\Omega:c\le f_N(x,y)<c+\delta\}\right|
\sim 1, 
\]
which shows the necessity of the factor $\varepsilon_0^{-1}$.

(ii) The factor $\varepsilon_0^{-1}$ in \eqref{eqn:gradient_bounded_below_general} remains necessary if, in addition to the assumptions of Proposition \ref{prop:gradient_lower_bound}, one assumes $|Df|\le 2a$. 
%It is unclear whether the factor $\varepsilon_0^{-1}$ in \eqref{eqn:gradient_bounded_below_general} remains necessary if, in addition to the assumptions of Proposition \ref{prop:gradient_lower_bound}, one assumes $|Df|\le 2a$. 
%Note that, by replacing $f$ with $f/a$, one may assume without loss of generality $a=1$.
This can be shown, for example, in the case $n=2$, by taking a sequence of functions $f_k$ $(k=1,2,\cdots)$ such that $\Gamma_k$, the $k^{th}$-stage Koch snowflake curve, is a level set of $f_k$. Details will appear elsewhere.
\end{rmk}

%%%%%%%%%%%%%%%%%%%%%%%%%%%%%%%%%%%%%%%%%%%%%%%%%%%%

\section{Level set estimate via Hessian lower bound}
\label{sec:convex}
In this section, we derive level set estimates for functions whose Hessian is positive definite and uniformly bounded below. \\[-0.5em]

\begin{thm}\label{thm:elliptic}
Let $\Omega\subset \R^n$ $(n\ge 2)$ be a bounded convex domain, and let $f$ be a convex function on $\Omega$ that satisfies 
\begin{equation}
\label{eq:sub-grad}
f(x) - f(x_0) \ge {D}f(x_0) \cdot (x - x_0) + \frac{a}{2} |x - x_0|
^2, \quad \forall x_0,\, x \in \Omega,
\end{equation}
where $a > 0$ is a constant and ${D}f(x_0)$ is a subgradient of $f$ at $x_0$. Then for all $c\in\mathbb R$ and $\delta>0$, 
\begin{equation}
\label{eq:Cn-prop-1}
\left|\left\{x \in\Omega: c \le  f(x)<c+\delta\right\}\right| \le  C_\Omega\,\frac{\delta}{a},
\end{equation}
where 
\begin{equation*}
    C_\Omega=|S^{n-1}|\,r(\Omega)^{n-2}.
\end{equation*}
%The constant $C_\Omega$ is sharp, as can be seen from the example $\Omega=B^n(0,1)$, $f(x)=|x|^2$, $c=1$, $\delta\to 0$.
\end{thm}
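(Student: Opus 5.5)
The plan is to combine the coarea formula with a solid-angle identity on the convex sublevel sets of $f$. I have only checked that it closes with the constant $|S^{n-1}|(\diam\,\Omega)^{n-2}$; the last step, getting $r(\Omega)$ in place of $\diam\,\Omega$, is unresolved.

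\emph{Step 1: reductions.} First I reduce to $f$ smooth on a neighbourhood of $\overline\Omega$ with $D^2f\ge aI_n$. I would mollify $f$, which keeps condition \eqref{eq:sub-grad} up to an arbitrarily small loss, and then pass to the limit. By replacing $f$ with $f/a$, I may take $a=1$.

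\emph{Step 2: sublevel sets and coarea.} The sublevel sets $K_t=\{x\in\Omega: f(x)<t\}$ are convex, and
\[
\{c\le f<c+\delta\}=K_{c+\delta}\setminus K_c .
\]
By the coarea formula it suffices to show, for every $t$,
\[
\int_{\{f=t\}\cap\Omega}\frac{d\sigma}{|Df|}\le |S^{n-1}|\,r(\Omega)^{n-2},
\]
and then integrate over $t\in[c,c+\delta)$.

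\emph{Step 3: a radial lower bound for $|Df|$.} Let $x_0$ minimise $f$ over $\overline\Omega$. The first-order condition gives $Df(x_0)\cdot(x-x_0)\ge0$ for all $x\in\Omega$. Adding \eqref{eq:sub-grad} applied at $x_0$ and at $x$ yields the monotonicity inequality
\[
Df(x)\cdot(x-x_0)\ge |x-x_0|^2 .
\]
On $\{f=t\}$ the gradient $Df$ is parallel to the outer normal $\mathbf n$ of $K_t$. Therefore
\[
\frac{1}{|Df(x)|}\le \frac{(x-x_0)\cdot\mathbf n(x)}{|x-x_0|^2}
=|x-x_0|^{n-2}\,\frac{(x-x_0)\cdot \mathbf n(x)}{|x-x_0|^{n}} .
\]
The last factor times $d\sigma$ is exactly the solid-angle element seen from $x_0$. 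The part of $\partial K_t$ lying on $\partial\Omega$ is not in $\{f=t\}$ and only helps.

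\emph{Step 4: integrate in polar coordinates.} Let $\rho_t(\theta)$ be the radial function of $K_t$ about $x_0$. The previous step gives
\[
\int_{\{f=t\}}\frac{d\sigma}{|Df|}\le\int_{S^{n-1}}\rho_t(\theta)^{n-2}\,d\theta .
\]
Equivalently, along each ray $g(s)=f(x_0+s\theta)$ satisfies $g'(s)\ge s$, so
\[
\int s^{n-1}\mathbf 1_{\{c\le g<c+\delta\}}\,ds\le \rho^{n-2}\delta .
\]
Since $\rho_t\le\diam\,\Omega$, this already gives the bound with $(\diam\,\Omega)^{n-2}$. For $n=2$ the estimate is exact. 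For $n=3$ I would pair $\theta$ with $-\theta$: since $\rho(\theta)+\rho(-\theta)\le\diam\,\Omega=2r(\Omega)$, the integral is at most $|S^{2}|\,r(\Omega)$.

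\emph{Main obstacle: $n\ge4$.} Here $\rho\mapsto\rho^{n-2}$ is convex, so the antipodal pairing fails when $x_0$ lies near $\partial\Omega$. Sharpening $\diam\,\Omega$ to $r(\Omega)$ is the heart of the matter, and I have not verified either option below.

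\emph{First option: change the centre.} I would exploit the freedom in choosing the centre. Applying the argument to $f-\ell$ for a suitable affine $\ell$ leaves $D^2f$ unchanged but moves the minimiser. This lets one place $x_0$ centrally in $K_t$, though the shift also changes which level sets are being counted. The aim would be to bound the weighted solid-angle integral by the value it takes when $K_t$ is a ball; the isodiametric inequality, as in Lemma \ref{lem:gradient_lower_bound}, would then give $r(\Omega)$.

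\emph{Second option: use opposite boundary points.} Instead of a single centre, I would use pairs of points $x,y\in\{f=t\}$ with opposite normals. Monotonicity gives
\[
|Df(x)|\,h_x+|Df(y)|\,h_y\ge |x-y|^2,
\]
which bounds $1/|Df|$ by the width of $K_t$ in the normal direction. The goal would then be to control the surface integral $\int_{\partial K_t} w(\mathbf n)^{-1}\,d\sigma$ through the Gauss map by its value for a ball.
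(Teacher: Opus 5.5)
\textbf{There is a genuine gap: the sharp constant for $n\ge4$ is never reached.}

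Your ray argument is sound as far as it goes. Take $x_0$ to be the minimiser of $f$ over $\overline\Omega$. The variational inequality $Df(x_0)\cdot(x-x_0)\ge 0$ together with \eqref{eq:sub-grad} gives $g'(s)\ge as$ along every ray $g(s)=f(x_0+s\theta)$. Hence
\[
|\{x\in\Omega: c\le f(x)<c+\delta\}|\le \frac{\delta}{a}\int_{S^{n-1}}\rho(\theta)^{n-2}\,d\theta ,
\]
where $\rho$ is the radial function of $\Omega$ about $x_0$. This proves the theorem for $n=2$, and for $n=3$ via your antipodal pairing. It also makes the paper's blow-up construction in Proposition \ref{prop:elliptic} unnecessary, since that construction exists only to create an interior critical point.

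For $n\ge 4$, however, you stop at $(\diam\,\Omega)^{n-2}$, and neither of your options closes the gap. Subtracting an affine $\ell$ changes the sets $\{c\le f-\ell<c+\delta\}$, so it no longer bounds the quantity in the statement. The width/Gauss-map idea is only a heuristic. As written, the proposal does not prove the constant $|S^{n-1}|\,r(\Omega)^{n-2}$.

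\textbf{The missing idea.} The paper never works in $n$ dimensions: your $n=2$ bound $2\pi\delta/a$ does not depend on the domain at all.
\begin{itemize}
\item Write $x=(x',x'')$ with $x'\in\R^2$. Every slice $\{x':(x',x'')\in\Omega\}$ is a planar convex domain.
\item On each slice, $x'\mapsto f(x',x'')$ satisfies \eqref{eq:sub-grad} with the same $a$; use the first two components of the subgradient.
\item Fubini then gives $|\{c\le f<c+\delta\}|\le 2\pi|\Omega''|\,\delta/a$, where $\Omega''\subset\R^{n-2}$ is the projection of $\Omega$ onto the last $n-2$ coordinates.
\item The isodiametric inequality gives $|\Omega''|\le |B^{n-2}(0,1)|\,r(\Omega)^{n-2}$, and $2\pi|B^{n-2}(0,1)|=|S^{n-1}|$ yields exactly $C_\Omega$. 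This is Proposition \ref{prop:slicing}.
\end{itemize}

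\textbf{Your own route also closes in one line.} The obstacle you see for $n\ge 4$ comes only from the antipodal pairing being too crude. For $n\ge3$, write the integral as a volume integral and use that $|x-x_0|^{-2}$ is radially decreasing about $x_0$. Compare with the ball $B^n(x_0,R)$ with $|B^n(x_0,R)|=|\Omega|$:
\[
\int_{S^{n-1}}\rho(\theta)^{n-2}\,d\theta=(n-2)\int_\Omega\frac{dx}{|x-x_0|^{2}}\le (n-2)\int_{B^n(x_0,R)}\frac{dx}{|x-x_0|^{2}}=|S^{n-1}|\,R^{n-2}.
\]
The inequality holds because $|x-x_0|^{-2}\le R^{-2}$ on $\Omega\setminus B^n(x_0,R)$ and $\ge R^{-2}$ on $B^n(x_0,R)\setminus\Omega$, and these two sets have equal measure. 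The isodiametric inequality gives $R\le r(\Omega)$, which finishes the proof.
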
  

By rescaling, the theorem is equivalent to the case $a=1$. To prove the theorem, we first consider the case of functions of two variables. The general case follows from a similar slicing argument to the proof of Lemma \ref{lem:gradient_lower_bound}. We remark that the proofs of Propositions \ref{prop:convex}, \ref{prop:elliptic} and \ref{prop:mollify} below can be easily generalised to higher dimensions $n\ge 3$, but we only choose to present the case $n=2$ for simplicity, as it is already sufficient to deduce Theorem \ref{thm:elliptic}.\\[-0.5em]

We start from the more restrictive case where $f$ has a critical point in $\Omega$.

\begin{prop}
\label{prop:convex}
Theorem \ref{thm:elliptic} holds when $n=2$, $D^2 f \ge I_2$, and $f$ has a critical point in $\Omega$. (In this case, $C_\Omega=2\pi$).
\end{prop}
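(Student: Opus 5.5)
We need to show that for $n=2$, $D^2f\ge I_2$ and a critical point $x_0\in\Omega$, the set $\{x\in\Omega: c\le f(x)<c+\delta\}$ has area at most $2\pi\delta$. The plan is to use polar coordinates centred at the critical point and show that each radial ray meets the level set in a short interval.

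Translate so that $x_0=0$ and $f(0)=0$. Since $Df(0)=0$ and $D^2f\ge I_2$, we have $f\ge \tfrac12|x|^2$, and $f$ is strictly increasing along every ray from the origin. Fix $\theta\in S^1$. Since $\Omega$ is convex and contains $0$, the ray meets $\Omega$ in an interval $\{t\theta: 0\le t<R(\theta)\}$. Define $g_\theta(t)=f(t\theta)$. Then
\[
g_\theta'(0)=0,\qquad g_\theta''(t)=\theta^TD^2f(t\theta)\,\theta\ge 1,
\]
so $g_\theta'(t)\ge t$ for all $t\ge 0$.

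In polar coordinates,
\[
|\{c\le f<c+\delta\}|=\int_{S^1}\int_{\{t\,:\,c\le g_\theta(t)<c+\delta\}} t\,dt\,d\theta .
\]
Since $g_\theta$ is increasing on $[0,R(\theta))$, the inner set is an interval $J_\theta$. On $J_\theta$ we have $t\le g_\theta'(t)$, so
\[
\int_{J_\theta} t\,dt\le\int_{J_\theta} g_\theta'(t)\,dt\le\delta .
\]
Integrating over $\theta\in S^1$ then gives the bound $2\pi\delta$, which is exactly $C_\Omega\delta$ when $n=2$ and $a=1$.

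The computation is elementary, so the only step needing care is regularity. We should check that $D^2f\ge I_2$ is interpreted so that $g_\theta'(t)\ge t$ holds. If $f\in C^2$, this follows directly from the fundamental theorem of calculus applied to $g_\theta'$. If only a weaker notion is available, I would derive $g_\theta'(t)\ge t$ from the strong convexity inequality \eqref{eq:sub-grad} along the ray, using one-sided derivatives, since $g_\theta$ is convex and hence absolutely continuous. The polar change of variables itself is justified by Tonelli's theorem.
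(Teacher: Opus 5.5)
Your proof is correct and is essentially the paper's argument: you slice radially from the critical point and use $g_\theta'(t)\ge t$ on each ray, where the paper phrases the same bound through Cauchy's mean value theorem. The only difference is that the paper parametrises rays by the Minkowski functional of $\Omega$ and then converts back to Euclidean polar coordinates to get $C_\Omega=2\pi$, whereas you use Euclidean polar coordinates from the start, which is more direct.
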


\begin{proof}
By translation, we may assume $0\in \Omega$. Recall that the Minkowski functional $\rho(\cdot)$ associated with $\Omega$ is defined by 
\begin{equation}
\label{eq:rho(x)}
\rho(x)=\inf \left\{t>0: \frac{x}{t} \in \Omega\right\}, \quad x \in \mathbb{R}^2.
\end{equation}
It is easy to see that $x \in \Omega$ if and only if $\rho(x)<1$, and that $\rho(\cdot)$ is positively homogeneous, i.e., $\rho(\cdot)$ satisfies 
$$
\rho(\lambda x)=\lambda \rho(x), \quad \lambda \ge  0,\; x \in \mathbb{R}^2.
$$ 
Moreover, the boundary $\partial \Omega$ coincides 
$$
S_{\Omega}:=\left\{x \in \R^2: \rho(x)=1\right\}.
$$
Consider the map
$$
\begin{aligned}
\Pi:\; S_{\Omega} \times(0, \infty) & \longrightarrow \R^2 \backslash\{0\}, \\
(\omega, \rho) & \longmapsto \rho \omega.
\end{aligned}
$$
It is easy to verify that $\Pi$ is a homeomorphism, with the inverse given by
$$
\begin{aligned}
\Pi^{-1}:\; \R^2 \backslash\{0\} & \longrightarrow S_{\Omega} \times(0, \infty), \\
x & \longmapsto\Big(\frac{x}{\rho(x)},\;\rho(x)\Big).
\end{aligned}
$$
If we equip $S_{\Omega}$ with the measure 
$$
\mu_S(E):=2\left|\Pi(E \times(0,1))\right|, \quad E \subset S_{\Omega}, 
$$
and equip $(0, \infty)$ with the measure $\rho d\rho$, then, similar to integration in polar coordinates, we have 
\begin{equation}
\label{eq:polar}
\int_{\R^2} g(x) d x=\int_{S_{\Omega}}\left(\int_0^{\infty} g(\rho \omega)\rho d \rho \right)\mu_S(d \omega). 
\end{equation} 

We now turn to the proof of \eqref{eq:Cn-prop-1} with $C_\Omega=2\pi$. After replacing $f$ by $f-f(0)$, 
we may assume that $f(0)=0$. Fix $x \in \Omega$. 
Since $Df(0)=0$, applying Taylor’s theorem with a Lagrange remainder to $f(tx)$ at $t=0$ yields
$$
f(x)=\frac{1}{2} x^{T} D^2 f(\theta x)\,x 
$$
for some $\theta=\theta(x)\in(0,1)$. 
Since $D^2 f \ge  I_2$, it follows that 
\begin{equation}
\label{eq:eq1}
f(x)\ge  \frac{1}{2}|x|^2\ge 0, \quad x \in \Omega. 
\end{equation} 
In particular, $f(x)=0$ holds only when $x=0$. 

In view of \eqref{eq:eq1}, it suffices to consider the case $c>0$. 
Using \eqref{eq:polar}, we can write
\begin{equation}
\label{eq:eq4}
\begin{aligned}
    &\big|\left\{x\in\Omega: c\le f(x) < c+\delta\right\}\big|\\
=&\int_{S_\Omega}\left(\int_{\{\rho\in(0,1):\,c \le  f(\rho \omega) < c+\delta\}} \rho d \rho\right) \mu_S(d\omega). 
\end{aligned}
\end{equation}
Writing 
$$\big\{\rho\in(0,1): c \le  f(\rho\omega) < c+\delta\big\}=:\big[\rho_0(\omega), \rho_1(\omega)\big),$$
with 
$$f\big(\rho_0(\omega)\omega\big)=c,\quad f\big(\rho_1(\omega)\omega\big)\le c+\delta,$$ 
we have 
\begin{equation}
\label{eq:eq2}
\int_{\{\rho\in(0,1):\,c \le  f(\rho \omega) < c+\delta\}} \rho  d \rho = \frac{\rho_1(\omega)^n-\rho_0(\omega)^n}{n}. 
\end{equation}
Now apply Cauchy's Mean Value Theorem to the functions $\rho^2$ and $\phi(\rho):=f(\rho\omega)$ at $\rho=\rho_0(\omega),\, \rho_1(\omega)$. We obtain 
\begin{align}
\frac{\rho_1^2-\rho_0^2}{\delta}
&\le \frac{\rho_1^2-\rho_0^2}{f(\rho_1\omega)-f(\rho_0\omega)}
= \frac{2 r }{\phi'(r)}\label{eq:eq3},
\end{align}
for some $r \in(\rho_0, \rho_1)\subset(0,1)$. 
However, notice that 
\begin{equation*}
    \phi'(r)=\phi'(r)-\phi'(0)=\phi''(\theta r)r=\omega^T D^2f(\theta r\omega)\omega r\ge |\omega|^2 r.  
\end{equation*}
Therefore, 
$$\frac{2r }{\phi'(r)}
\le \frac{2 r }{|\omega|^2 r}=\frac{2}{|\omega|^2}.$$ 
Combining this with \eqref{eq:eq4}, \eqref{eq:eq2}, and \eqref{eq:eq3}, 
we obtain 
\begin{equation*}
\big|\left\{x\in\Omega: c\le f(x) < c+\delta\right\}\big|
\le C_\Omega \delta, 
\end{equation*}
where 
\begin{align}
C_\Omega 
&=\int_{S_\Omega}\frac{1}{|\omega|^2}\mu_S(d\omega)\notag\\
&=\int_0^1\left(\int_{S_\Omega}\frac{\rho}{|\rho\omega|^2}\mu_S(d\omega)\right)\rho d\rho\notag\\
&=\int_{\Omega}\frac{\rho(x)}{|x|^2}dx.\label{eq:C_Omega-rho}
\end{align}
Using the polar coordinates, we can write 
$$\partial \Omega=\big\{(\omega,r(\omega)):\omega\in S^{1}\big\}.$$
Then, for $x=|x|\omega\in\Omega\backslash\{0\}$, we have 
$\rho(x)=\frac{|x|}{r(\omega)}.$
Thus, from \eqref{eq:C_Omega-rho} we obtain 
\begin{align*}
C_\Omega
&=\int_{S^1}\int_0^{r(\omega)} \frac{r(\omega)^{-1}}{r} rdrd\omega= \int_{ S^1}d\omega=2\pi.
\end{align*}
This completes the proof of Proposition \ref{prop:convex}. 
\end{proof}

We now consider a slightly more general version of Proposition \ref{prop:convex} where $f$ does not necessarily have a critical point in $\Omega$. 

\begin{prop}\label{prop:mollification}
\label{prop:elliptic}
Theorem \ref{thm:elliptic} holds when $n=2$ and $D^2 f\ge I_2$.
\end{prop}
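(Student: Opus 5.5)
The plan is to reduce to Proposition \ref{prop:convex} in three steps. First, extend $f$ to a strongly convex function on all of $\R^2$, which necessarily has a minimiser. Second, mollify the extension so that it becomes smooth, keeps $D^2\ge I_2$, and has a genuine critical point. Third, apply Proposition \ref{prop:convex} on a slightly enlarged convex domain containing that critical point. This works because in dimension two the constant $C_\Omega=2\pi$ does not depend on the domain, so enlarging the domain costs nothing.

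\emph{Step 1: exhaustion.} Since $D^2f\ge I_2$ on the convex set $\Omega$, Taylor's theorem along segments gives \eqref{eq:sub-grad} with $a=1$ for all $x_0,x\in\Omega$. However, $Df$ may be unbounded near $\partial\Omega$. I therefore fix a bounded convex open $\Omega_k$ with $\overline{\Omega_k}\subset\Omega$ and $\Omega_k\uparrow\Omega$. It suffices to prove the bound $2\pi\delta$ for $\{x\in\Omega_k:c\le f(x)<c+\delta\}$ and then let $k\to\infty$ by monotone convergence. On $\overline{\Omega_k}$ the functions $f$ and $Df$ are bounded.

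\emph{Step 2: extension.} Define
\[
F(x)=\sup_{x_0\in\Omega_k}\Big[f(x_0)+Df(x_0)\cdot(x-x_0)+\tfrac12|x-x_0|^2\Big],\qquad x\in\R^2.
\]
Because $f$ and $Df$ are bounded on $\Omega_k$, the supremum is finite. The function $F-\tfrac12|x|^2$ is a supremum of affine functions, since $\tfrac12|x-x_0|^2-\tfrac12|x|^2$ is affine in $x$. Hence $F-\tfrac12|\cdot|^2$ is convex, and $F$ is continuous and coercive. By \eqref{eq:sub-grad}, with $x_0=x$ attaining the supremum, we have $F=f$ on $\Omega_k$.

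\emph{Step 3: mollification.} Let $F_\eps=F*\varphi_\eps$ for a standard mollifier $\varphi_\eps$. Then $F_\eps\in C^\infty(\R^2)$. Convolution preserves convexity, and $\tfrac12|\cdot|^2*\varphi_\eps$ equals $\tfrac12|\cdot|^2$ plus a constant, so $F_\eps-\tfrac12|\cdot|^2$ is convex and hence $D^2F_\eps\ge I_2$. Being coercive, $F_\eps$ has a critical point $p_\eps$. Let $\Omega'$ be the convex hull of $\Omega_k\cup B(p_\eps,1)$; this is a bounded convex domain containing $\Omega_k$ and $p_\eps$. Proposition \ref{prop:convex}, applied to $F_\eps$ on $\Omega'$, gives $|\{x\in\Omega':c'\le F_\eps(x)<c'+\delta'\}|\le 2\pi\delta'$ for every $c'$ and $\delta'>0$.

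\emph{Step 4: passing to the limit.} Since $F$ is continuous, $F_\eps\to F=f$ uniformly on the compact set $\overline{\Omega_k}$. Given $\eta>0$, choose $\eps$ so small that $|F_\eps-f|<\eta$ on $\Omega_k$. Then
\[
\{x\in\Omega_k:c\le f<c+\delta\}\subset\{x\in\Omega':c-\eta\le F_\eps<c+\delta+\eta\},
\]
and the right-hand side has measure at most $2\pi(\delta+2\eta)$. Letting $\eta\to0$ and then $k\to\infty$ proves the proposition.

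The step I expect to be the main obstacle is the construction of the extension: it must keep the Hessian lower bound, be finite everywhere, and agree with $f$. The supremum-of-paraboloids formula settles this once $Df$ is bounded, which is exactly why the exhaustion in Step 1 is needed. The mollification is also essential, because $F$ is only $C^{1,1}$-type and generally not $C^2$, whereas Proposition \ref{prop:convex} uses Taylor expansion with second derivatives.
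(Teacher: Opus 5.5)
Your proof is correct, but it produces the critical point by a different mechanism from the paper.

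The paper never leaves $\Omega$ and never approximates $f$. It adds to $f$ a mollified convex barrier $\widetilde g_{\varepsilon,A}$, built from the Minkowski functional, which vanishes on $\Omega_{1-2\varepsilon}$ and is large on $\partial\Omega_{1-\varepsilon/4}$. Then $f+\widetilde g_{\varepsilon,A}$ is still $C^2$ with Hessian at least $I_2$, coincides with $f$ on $\Omega_{1-2\varepsilon}$, and has an interior minimum, so Proposition \ref{prop:convex} applies on $\Omega$ itself.

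You instead extend $f|_{\Omega_k}$ to all of $\R^2$ by the supremum of tangent paraboloids (the minimal convex extension of $f-\frac12|\cdot|^2$, plus $\frac12|\cdot|^2$). You then mollify, take a global minimiser wherever it lies, and apply Proposition \ref{prop:convex} on the enlarged domain $\Omega'$.

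This route has two costs. The first is an approximation step, which you handle correctly via uniform convergence on $\overline{\Omega_k}$ and widening the window by $2\eta$. The second is reliance on the planar constant $2\pi$ being independent of the domain. For $n\ge 3$, the radial argument of Proposition \ref{prop:convex} gives a constant that grows with the size of the domain seen from the critical point, so enlarging the domain would no longer be free. The paper's barrier avoids this by keeping the critical point inside $\Omega$.

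The gain is that your argument uses only \eqref{eq:sub-grad} and boundedness of $f$ and its subgradients on $\overline{\Omega_k}$, which is automatic for convex $f$. Smoothness comes from mollifying $F$ rather than from $f$. So the same argument proves Proposition \ref{prop:mollify} directly, making the paper's separate mollification step unnecessary.

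One small correction to your closing remark: $F$ need not be $C^{1,1}$, or even $C^1$, outside $\overline{\Omega_k}$. Two tangent paraboloids with different gradients can be simultaneously active at the same point. This is harmless, since your argument only uses continuity of $F$.
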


\begin{proof}
By translation, we may assume that $0\in\Omega$. 
Let $\rho(\cdot)$ be the Minkowski functional \eqref{eq:rho(x)} associated with $\Omega$, and denote for $\rho>0$ 
\begin{equation}
\label{eq:Omega_rho}
\Omega_\rho=\{x\in\R^2: \rho(x)<\rho\}.
\end{equation}
Fix $\varepsilon\in(0,1/2)$. 
Consider 
$$g_{\varepsilon,A}(x)=g_{\varepsilon,A}(\rho(x)
),\quad x\in\R^2,$$
where $A>0$ is a large constant (to be chosen), and 
\[g_{\varepsilon,A}(\rho)=\begin{cases}
A\big(\rho-(1 - \varepsilon)\big)^{2},&\rho\in [1-\varepsilon,\infty); 
\\ 0, & \rho\in[0,1-\varepsilon].\end{cases}\]
It is easy to see that $g_{\varepsilon,A}(\rho)$ is convex and nondecreasing in $\rho\in[0,\infty)$. Since $\rho$ is convex, $g_{\varepsilon,A}(x)$ is convex in $x\in\R^2$. 

Fix a nonnegative bump function $\varphi \in C_c^\infty(B^2(0,1))$ with $\int_{\mathbb{R}^2} \varphi(x) dx = 1$, and denote $\varphi_a(x) = a^{-2} \varphi(x/a)$ $(a>0)$. 
Consider the mollified function 
$$\widetilde g_{\varepsilon,A}(x)=(g_{\varepsilon,A}*\varphi_a)(x),\quad x\in\R^2.$$
It is easy to verify that, when $a>0$ is small enough (depending only on $\varepsilon$ and $\Omega$), we have 

\noindent (i) $\widetilde g_{\varepsilon,A}\in C^2(\R^2)$;  

\noindent (ii) $D^2\widetilde g_{\varepsilon,A}(x)\ge 0,\;x\in\R^2$; 

\noindent (iii) $\widetilde g_{\varepsilon,A}(x)= 0,\; x\in\Omega_{1-2\varepsilon}$; 

\noindent (iv) $\widetilde g_{\varepsilon,A}(x)\ge A(\varepsilon/2)^2,\; x\in\partial\Omega_{1-\varepsilon/4}$. 

We can use $\widetilde g_{\varepsilon,A}$ to ``blow up'' the value of $f(x)$ outside ${\Omega}_{1 - 2\varepsilon}$ as follows: Set 
$$f_{\varepsilon,A}(x)=f(x)+\widetilde g_{\varepsilon,A}(x)
,\quad x\in\Omega.$$
From (i)--(iii), we have 

\noindent(i$^\prime$)
$f_{\varepsilon, A} \in C^{2}(\Omega)$; 

\noindent(ii$^\prime$)
${D}^{2} f_{\varepsilon, A}(x)\ge  I_{2},\;x\in\Omega$; 

\noindent(iii$^\prime$)
$f_{\varepsilon,A}(x)=f(x),\; x\in{\Omega}_{1-2\varepsilon}.$\\
Moreover, choosing 
$$A=3\left(\frac{\varepsilon}{2}\right)^{-2} \|f\|
_{C(\overline{\Omega}_{1-\frac{\varepsilon}{4}})},$$
from (iv) we have, 
for $x\in\partial\Omega_{1-\varepsilon/4}$, 
$$
\begin{aligned}
f_{\varepsilon, A}(x) &= \widetilde g_{\varepsilon, A}\left(x\right)+f(x) \\
&\ge 3 \|f\|
_{C(\overline{\Omega}_{1-\frac{\varepsilon}{4}})}+f(x) \\
&\ge  2\|f\|
_{C(\overline{\Omega}_{1-\frac{\varepsilon}{4}})} \\
&>\|f\|
_{C(\overline{\Omega}_{1-{2\varepsilon}})}\\
&=\|f_{\varepsilon, A}\|
_{C(\overline{\Omega}_{1-{2\varepsilon}})}.
\end{aligned}
$$
As a consequence, the minimum $$\min _{x \in \overline{\Omega}_{1-\frac{\varepsilon}{4}}} f_{\varepsilon, A}(x)$$ is attained in ${\Omega}_{1-\frac{\varepsilon}{4}}$, which yields a critical point of $f_{\varepsilon, A}$ in $\Omega$. 
Now apply the case (with a critical point) we have shown above to $f_{\varepsilon,A}$. 
We have  
\[
\left| \left\{ x \in \Omega: c \le f_{\varepsilon,A}(x) < c+\delta \right\} \right| \le 2\pi \delta.
\]
It follows from (iii$^\prime$) that 
\[
\left| \left\{ x \in \Omega_{1-2\varepsilon}: c \le f(x) < c+\delta \right\} \right| \le 2\pi \delta.
\]
Finally, taking $\varepsilon \to 0$ gives \eqref{eq:Cn-prop-1}. 
\end{proof}

Our next step is to further relax the regularity assumption $f \in C^{2}(\Omega)$ using a standard mollification argument. 

\begin{prop}\label{prop:mollify}
    Theorem \ref{thm:elliptic} holds when $n=2$.
\end{prop}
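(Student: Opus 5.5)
We reduce Proposition \ref{prop:mollify} to Proposition \ref{prop:elliptic} by mollification. Take $a=1$ and $n=2$, and let $f$ be convex on $\Omega$ with \eqref{eq:sub-grad}. Condition \eqref{eq:sub-grad} says exactly that $h(x):=f(x)-\frac12|x|^2$ is convex on $\Omega$. To see this, note that \eqref{eq:sub-grad} rewrites as $h(x)-h(x_0)\ge (Df(x_0)-x_0)\cdot(x-x_0)$, which is a global supporting hyperplane at every point. In particular $f$ is continuous on $\Omega$, being convex.

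Fix $\eps\in(0,1/2)$ and normalise $0\in\Omega$. With $\Omega_\rho$ as in \eqref{eq:Omega_rho}, the set $\overline{\Omega}_{1-\eps}$ is a compact subset of $\Omega$ at positive distance $d_\eps$ from $\partial\Omega$.

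\textbf{Step 1: mollify.} Take $\varphi$ as in the proof of Proposition \ref{prop:elliptic} and $0<t<d_\eps$. Set
\[
f_t=h*\varphi_t+\tfrac12|x|^2 \quad\text{on } \Omega_{1-\eps}.
\]
Then $h*\varphi_t$ is smooth and convex there, since it is an average of translates of the convex function $h$. Hence $f_t\in C^\infty(\Omega_{1-\eps})$ and $D^2f_t\ge I_2$. Alternatively one can mollify $f$ directly: $f*\varphi_t=h*\varphi_t+\frac12|x|^2+\mathrm{const}$, and the constant does not matter.

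\textbf{Step 2: apply the $C^2$ case.} The set $\Omega_{1-\eps}$ is a bounded convex domain with $r(\Omega_{1-\eps})\le r(\Omega)$, and for $n=2$ the constant $C_\Omega=2\pi$ does not depend on the domain anyway. Proposition \ref{prop:elliptic} applied to $f_t$ on $\Omega_{1-\eps}$ gives
\[
|\{x\in\Omega_{1-\eps}: c'\le f_t(x)<c'+\delta'\}|\le 2\pi\delta'
\]
for all $c'$ and all $\delta'>0$.

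\textbf{Step 3: pass to the limit.} This is the step needing a little care. Since $f$ is continuous, $f_t\to f$ uniformly on the compact set $\overline{\Omega}_{1-\eps}$. Given $\eta>0$, choose $t$ so small that $\|f_t-f\|_{C(\overline\Omega_{1-\eps})}<\eta$. Then
\[
\{c\le f<c+\delta\}\cap\Omega_{1-\eps}\subset\{c-\eta\le f_t<c+\delta+\eta\}.
\]
Applying Step 2 with $c'=c-\eta$ and $\delta'=\delta+2\eta$ gives
\[
|\{x\in\Omega_{1-\eps}: c\le f(x)<c+\delta\}|\le 2\pi(\delta+2\eta).
\]
Let $\eta\to0$, then $\eps\to0$. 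The sets $\Omega_{1-\eps}$ increase to $\Omega$, so monotone convergence yields \eqref{eq:Cn-prop-1} with $C_\Omega=2\pi$, which is the $n=2$ value of $|S^{1}|\,r(\Omega)^{0}$.

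The only genuine obstacle is ensuring that mollification preserves the quantitative convexity $D^2\ge I_2$ without any $C^2$ assumption on $f$. This is handled by mollifying the convex part $h$ rather than $f$ itself. Because we approximate uniformly on compact subsets rather than in measure, we never need to worry about measure-zero pieces of the level set boundary.
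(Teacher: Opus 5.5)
Your proof is correct and is essentially the paper's argument: mollify on the shrunken domain $\Omega_{1-\eps}$, check that $D^2\ge I_2$ survives mollification, apply Proposition \ref{prop:elliptic} there, and let $\eps\to 0$. The differences are cosmetic. You verify the Hessian bound through convexity of $h=f-\frac12|x|^2$, where the paper uses second difference quotients, which is the same computation. You pass to the limit via uniform convergence and an $\eta$-widened interval, where the paper uses Fatou's lemma on the open sets $\{c<f_k<c+\delta\}$. (Also, in your aside, $f*\varphi_t-h*\varphi_t-\frac12|x|^2$ is affine rather than constant unless $\varphi$ is even, which is harmless.)
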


\begin{rmk}
When $n=2$, by a limiting argument, \eqref{eq:Cn-prop-1} holds even when $\Omega$ is unbounded.
\end{rmk}

\begin{proof}[Proof of Proposition \ref{prop:mollify}]
By a translation, assume $0\in\Omega$. Denote by $\rho(\cdot)$ the Minkowski functional associated with $\Omega$, and denote $\Omega_\rho$ as in \eqref{eq:Omega_rho}. 
Upon taking $\varepsilon \to 0$, it suffices to show that for any $\varepsilon\in(0,1)$, we have
$$|\{x \in \Omega_{1-\varepsilon}: c < f(x) < c + \delta\}|
 \le 2\pi \delta.$$
Let $0 \le \varphi \in C_c^\infty(\mathbb R^2)$ be a bump function with $\int_{\mathbb R^2} \varphi(x) dx = 1$.
Denote $\varphi_k(x) = k^2 \varphi(k x)$ and  $f_k = f * \varphi_k$. 
Since
$$f(x) = \lim_{k \to \infty} f_k (x), \quad x \in \Omega_{1-\varepsilon},$$
by Fatou's lemma, it suffices to show that
\begin{equation}
\label{eq:fk-bound}
|\{x \in \Omega_{1-\varepsilon} : c < f_k (x) < c+\delta\}| \le C_\Omega \delta
\end{equation}
holds for sufficiently large $k$. 

Since $f_k\in C^2(\Omega_{1-\varepsilon})$, 
applying Proposition \ref{prop:elliptic} to $f_k$ would give 
$$|\{x \in \Omega_{1-\varepsilon}: c < f_k (x) < c+\delta\}| \le 2\pi\delta,$$
if we could show 
$$D^2 f_k(x) \ge I_2, \quad x \in \Omega_{1-\varepsilon},$$
or, equivalently,
\begin{equation}
\label{eq:2nd-der}
\lim_{t \to 0} \frac{f_k(x+t\omega) + f_k(x-t\omega) - 2 f_k(x)}{t^2} \ge 1, 
\end{equation}
for any $x \in \Omega_{1-\varepsilon}$ and $\omega \in S^{1}$. 
To show \eqref{eq:2nd-der}, notice that by \eqref{eq:sub-grad}, 
\begin{align*}
&f(x_0+t\omega) + f(x_0-t\omega) - 2 f(x_0) \\
=&\big(f(x_0+t\omega) -  f(x_0)\big) + \big(f(x_0-t\omega) -  f(x_0)\big)\\
\ge &\big(D f(x_0) \cdot (t\omega) + \frac{1}{2} |t\omega|^2\big) + \big(D f(x_0) \cdot (-t\omega) + \frac{1}{2} |-t\omega|^2\big) \\
=& t^2, 
\end{align*}
when $x_0\in \Omega$ and $t$ is sufficiently small. Therefore, for $x \in \Omega_{1-\varepsilon}$ and sufficiently small $t$, we have 
\begin{align*}
&f_{k}(x + t\omega)+f_{k}(x - t\omega)-2f_{k}(x) \\
=&\int_{\mathbb{R}^{n}}\left[f(x - y + t\omega)+f(x - y - t\omega)-2f(x - y)\right]\varphi_{k}(y)dy \\
\geq&\int_{\mathbb{R}^{n}}t^{2}\varphi(y)dy 
=t^{2}, 
\end{align*}
from which \eqref{eq:2nd-der} follows. This completes the proof of Proposition \ref{prop:mollify}.
\end{proof}

Finally, we use a similar slicing argument to the proof of Lemma \ref{lem:gradient_lower_bound} to prove Theorem \ref{thm:elliptic}. In fact, we can prove something stronger. It is easy to see that Theorem \ref{thm:elliptic} follows from Proposition \ref{prop:slicing} as a corollary.
\begin{prop}\label{prop:slicing}
Let $\Omega\subset \R^n$ $(n\ge 2)$ be a bounded convex domain, and let $f:\Omega\to \R$ be a measurable function. For $x\in \Omega$, write $x=(x',x'')$, where $x'\in \R^2$ and $x''\in \R^{n-2}$, and denote by $\Omega'$ the orthogonal projection of $\Omega$ onto the first two coordinates. Fix $x''$ and denote $g(x')=f(x)$. Suppose there exists $a>0$ such that 
\begin{equation}
\label{eq:sub-grad_slicing}
g(x') - g(x'_0) \ge {D}g(x'_0) \cdot (x' - x'_0) + \frac{a}{2} |x' - x'_0|
^2, \quad \forall x'_0,\, x' \in \Omega',
\end{equation}
where $a > 0$ is a constant and ${D}g(x'_0)$ is a subgradient of $g$ at $x'_0$. Then for all $c\in\mathbb R$ and $\delta>0$, 
\begin{equation}\label{eqn:slicing}
\left|\left\{x \in\Omega: c \le 
f(x)<c+\delta\right\}\right| \le  
 |S^{n-1}|\,
r(\Omega)^{n-2}\,\frac{\delta}{a}.
\end{equation}
\end{prop}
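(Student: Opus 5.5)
The plan is to slice $\Omega$ into two-dimensional pieces, apply Proposition \ref{prop:mollify} (the case $n=2$ of Theorem \ref{thm:elliptic}) on each piece, and integrate over the remaining $n-2$ coordinates, exactly as in the proof of Lemma \ref{lem:gradient_lower_bound}.

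\textbf{Setting up the slices.} Let $\Omega''\subset\R^{n-2}$ be the orthogonal projection of $\Omega$ onto the last $n-2$ coordinates. For each $x''\in\Omega''$, set
$$\Omega_{x''}=\{x'\in\R^2:(x',x'')\in\Omega\}.$$
Each $\Omega_{x''}$ is an open, bounded, convex subset of $\Omega'$, being the intersection of the convex domain $\Omega$ with an affine $2$-plane. Since $f$ is measurable, Tonelli's theorem gives
$$\bigl|\{x\in\Omega:c\le f(x)<c+\delta\}\bigr|=\int_{\Omega''}\bigl|\{x'\in\Omega_{x''}:c\le g(x')<c+\delta\}\bigr|\,dx'',$$
where $g=f(\cdot,x'')$. (When $n=2$ there is nothing to integrate, and we use the convention $|B^0|=1$.)

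\textbf{Bounding each slice.} Hypothesis \eqref{eq:sub-grad_slicing} holds for all $x_0',x'\in\Omega'$, hence in particular for all $x_0',x'\in\Omega_{x''}$. The subgradient property is local, so $Dg(x_0')$ is still a subgradient of $g$ on the open set $\Omega_{x''}$. The inequality \eqref{eq:sub-grad_slicing} also implies that $g$ is convex on $\Omega_{x''}$: it says $g$ lies above a supporting affine function at every point. Thus $g|_{\Omega_{x''}}$ satisfies the hypotheses of Theorem \ref{thm:elliptic} with $n=2$, and Proposition \ref{prop:mollify} gives
$$\bigl|\{x'\in\Omega_{x''}:c\le g(x')<c+\delta\}\bigr|\le 2\pi\,\frac{\delta}{a}.$$
Here I use that $C_{\Omega_{x''}}=|S^1|\,r^0=2\pi$, which does not depend on the slice. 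Integrating over $x''$ then yields the bound $2\pi\,|\Omega''|\,\delta/a$.

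\textbf{Isodiametric step.} Since $\diam\,\Omega''\le\diam\,\Omega$, the isodiametric inequality gives $|\Omega''|\le|B^{n-2}(0,r(\Omega))|$. It remains to check the identity
$$2\pi\,|B^{n-2}(0,r)|=|S^{n-1}|\,r^{n-2}.$$
Using $|B^{m}(0,r)|=\pi^{m/2}r^m/\Gamma(\tfrac m2+1)$ with $m=n-2$, the left side equals $2\pi^{n/2}r^{n-2}/\Gamma(\tfrac n2)$. This is exactly $|S^{n-1}|\,r^{n-2}$, and the identity also holds for $n=2$. This proves \eqref{eqn:slicing}.

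The only step needing any care is verifying that each slice function meets the hypotheses of the two-dimensional result, namely convexity, the subgradient condition, and openness and convexity of the slice domain. As explained above, all of these follow directly from restricting \eqref{eq:sub-grad_slicing} to $\Omega_{x''}$, so I expect no real obstacle.
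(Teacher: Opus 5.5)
Your proposal is correct and follows the paper's proof essentially step by step. You slice over $x''\in\Omega''$ via Tonelli, apply Proposition \ref{prop:mollify} on each two-dimensional slice to get $2\pi\,\delta/a$, bound $|\Omega''|$ by the isodiametric inequality, and conclude with $2\pi|B^{n-2}(0,1)|=|S^{n-1}|$. Your additional checks are details the paper leaves implicit: convexity of $g$ from the subgradient inequality, openness, boundedness and convexity of each slice, and the Gamma-function verification of that identity.
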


\begin{proof}
    Let $\Pi:\R^n\to \R^{n-2}$ be the orthogonal projection onto the last $n-2$ coordinates, and let $\Omega''=\Pi(\Omega)$. Clearly, $\Omega''$ is convex and $\diam\,\Omega''\le \diam\,\Omega$. Applying Tonelli's theorem and Proposition \ref{prop:mollify} to $g(x')$, we have
    \begin{equation*}
    \begin{aligned}
        |\{x\in \Omega:c\le f(x)<c+\delta\}|
        &\le \int_{\Omega''}|\{x':c\le f(x)<c+\delta\}|dx'\\
        &\le |\Omega''|\,2\pi\,\frac{\delta}{a}.
    \end{aligned}
    \end{equation*}
    By the isodiametric inequality, we have
    \begin{align*}
        |\Omega''|&\le |B^{n-2}(0,r(\Omega''))|\\
        &\le|B^{n-2}(0,r(\Omega))|\\
        &=|B^{n-2}(0,1)|\,r(\Omega)^{n-2}.
        %=|B^{n-2}(0,1)|\left(\frac {\diam\,\Omega}2\right)^{n-2}\delta.
    \end{align*}
    Then \eqref{eqn:slicing} follows from the identity $2\pi |B^{n-2}(0,1)|=|S^{n-1}|$.
\end{proof}

%%%%%%%%%%%%%%%%%%%%%%%%%%%%%%%%%%%%%%%%%%%%%%%%%%%%

\section{Level set estimate via nondegenerate Hessian}
\label{sec:nondegenerate}
In this section, we derive level set estimates for functions $f$ whose Hessian is nondegenerate but not necessarily positive definite. \\[-0.5em]

Recall Proposition \ref{prop:slicing}, which requires that the projection of $f$ onto a fixed subspace be strictly convex. If we only assume that $|\det D^2 f|$ is bounded above and below, or equivalently, $(D^2 f)^2$ is bounded above and below, then the situation becomes more subtle, as stated in the theorem below.

\begin{thm}\label{thm:hyperbolic}
Let $\Omega\subset \R^n$ $(n\ge 2)$ be a bounded convex domain. Suppose $f\in C^2(\Omega)$ satisfies:\\
(i) $(D^2 f)^{-1}$ exists and is bounded on $\Omega$;\\
(ii) There exists $\eps_0\in(0,\diam \,\Omega]$ such that for all $x,y\in\Omega$ with $|x-y|<\eps_0$, we have
    \begin{equation*}
        |D^2 f(x)-D^2 f(y)|<\frac{1}{2\|(D^2 f)^{-1}\|_\infty}.
    \end{equation*}
Then for all $c\in\R$ and 
    \begin{equation}\label{eqn:delta_upper_bound}
    0<\delta<\min\left\{\frac 1 2,\,\frac{\|D^2 f\|_\infty \|(D^2 f)^{-1}\|_\infty^2}{\eps_0^2}\right\}, 
     %\delta<\begin{cases}
        % \frac 1 2,\quad & \text{if }n\ge 3,\\
         %\min\left\{\frac 1 2,\,\frac{\|D^2 f\|_\infty \|(D^2 f)^{-1}\|_\infty^2}{\eps_0}\right\}, & \text{if }n=2,
     %\end{cases}
    \end{equation}
    we have \begin{equation}\label{eqn:hyperbolic_main_estimate}
    \begin{aligned}
        \left|\{x\in \Omega: c\le  f(x)< c+\delta\}\right|
        \lesssim  
        \eps_0^{-2}\|D^2 f\|_\infty\|
            (D^2 f)^{-1}\|_\infty^2(\diam\,\Omega)^n  m(\delta),
    \end{aligned}
    \end{equation}
where
    \begin{equation}          
        m(\delta)=\begin{cases}
            \delta |\log \delta|, &\text{if }n=2,\\
            \delta, & \text{if }n\ge 3.
        \end{cases}
    \end{equation}    
The implicit constant above (and in the proof below) depends only on $n$.
\end{thm}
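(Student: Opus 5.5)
Set $M=\|D^2f\|_\infty$ and $L=\|(D^2f)^{-1}\|_\infty$. We may assume $M<\infty$, since otherwise the bound is trivial. Assumption (ii) together with $|D^2f(x)^{-1}|\le L$ shows that on any cube $Q$ of diameter $<\eps_0$ with center $x_Q$, the matrix $H=D^2f(x_Q)$ satisfies $|D^2f(x)-H|<\frac{1}{2L}\le \frac12\sigma_{\min}(H)$. We will work in cubes of a smaller side $\eta\le \eps_0$, chosen at the end, and cover $\Omega$ by $\lesssim (\diam\,\Omega/\eta)^n$ of them. It then suffices to prove a local estimate
\[
|\{x\in Q\cap\Omega: c\le f(x)<c+\delta\}|\lesssim \eta^{n}\cdot(\text{something small}).
\]
On each cube we diagonalize $H=U\Lambda U^T$, with $|\lambda_i|\ge 1/L$ and $|\lambda_i|\le M$. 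Writing $y=U^T(x-x_Q)$, Taylor's theorem gives $f=f(x_Q)+b\cdot y+\frac12 y^T\Lambda y+E(y)$ with $|D^2E|<\frac1{2L}$. Completing squares, we may shift to the critical point $y_*$ of the quadratic part, where $y_*=-\Lambda^{-1}b$.

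The key local step separates two regimes. Near the point where $Df$ vanishes (if it exists), we use a gradient lower bound away from it. Since $D^2f$ is uniformly invertible on the convex set $Q\cap\Omega$ with $|D^2f\,v|\ge \frac{1}{2L}|v|$, the map $Df$ is injective and $|Df(x)-Df(x_*)|\ge \frac{|x-x_*|}{2L}$. So on the dyadic shell $A_k=\{x\in Q: |x-x_*|\sim 2^{-k}\eta\}$ we get $|Df|\gtrsim 2^{-k}\eta/L$. Moreover $|Df(x)-Df(y)|\le M|x-y|$, so the direction of $Df$ is stable on subcubes of side $\sim 2^{-k}\eta/(nML)$. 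Applying Proposition \ref{prop:gradient_lower_bound} (or Lemma \ref{lem:gradient_lower_bound} directly on subcubes) with $a\sim 2^{-k}\eta/L$ and $\eps_0\sim 2^{-k}\eta/(ML)$ to a shell of diameter $\sim 2^{-k}\eta$ yields
\[
|A_k\cap\{c\le f<c+\delta\}|\lesssim \frac{ML}{2^{-k}\eta}(2^{-k}\eta)^n\frac{\delta L}{2^{-k}\eta}=ML^2\,\delta\,(2^{-k}\eta)^{n-2}.
\]
Summing over $k$ from $0$ to $K$ gives $ML^2\delta\,\eta^{n-2}\sum_k 2^{-k(n-2)}$, which is $\lesssim ML^2\delta\eta^{n-2}$ for $n\ge3$ and $\lesssim ML^2\delta K$ for $n=2$. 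The innermost ball of radius $2^{-K}\eta$ contributes at most its volume $(2^{-K}\eta)^n$. For $n=2$ we choose $2^{-K}\eta\sim\sqrt{\delta ML^2}$, so $K\sim|\log\delta|$; this is the source of the factor $\delta|\log\delta|$. The hypothesis \eqref{eqn:delta_upper_bound} ensures that $\sqrt{\delta ML^2}\le \eps_0$-scale, which makes this choice of $K\ge0$ admissible, and also $\delta<1/2$ so that $|\log\delta|\gtrsim1$. For $n\ge3$ we instead let $K\to\infty$. If there is no critical point in $Q$, or $x_*$ lies outside $Q$, the same shell decomposition centered at the nearest point of $Df^{-1}(0)$'s would-be location still works; more simply, we center at the minimizer of $|Df|$ over $\overline{Q\cap\Omega}$ and use injectivity to get $|Df(x)|\ge \frac{|x-x_*|}{2L}$ there as well, since the segment argument only needs convexity.

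Finally we sum over cubes with $\eta=\eps_0$. Each cube contributes $\lesssim ML^2\eps_0^{n-2}m(\delta)$, and there are $\lesssim(\diam\,\Omega/\eps_0)^n$ cubes. The total is $\lesssim \eps_0^{-2}ML^2(\diam\,\Omega)^n m(\delta)$, which is \eqref{eqn:hyperbolic_main_estimate}.

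The main obstacle is the lower bound $|Df(x)|\gtrsim |x-x_*|/L$ when $x_*$ is not a genuine critical point. I would justify it via $Df(x)-Df(x_*)=\int_0^1 D^2f(x_*+t(x-x_*))\,dt\,(x-x_*)$, where the averaged Hessian is within $\frac1{2L}$ of $H$ and hence invertible with norm of inverse $\le 2L$. At the minimizer of $|Df|$, a first-order optimality condition gives $(x-x_*)\cdot D^2f(x_*)Df(x_*)\ge0$ or similar, and combining these yields $|Df(x)|\gtrsim|x-x_*|/L$, up to constants depending on $n$.
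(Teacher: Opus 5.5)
Your outline is the paper's proof. Both arguments cover $\Omega$ by a grid of $\eps_0$-cubes and centre at a minimiser $x_*$ of $|Df|$ on each cube. Both use dyadic shells on which $|Df|\gtrsim\sigma/L$, and apply Proposition~\ref{prop:gradient_lower_bound} on convex pieces (your subcubes; the shells themselves are not convex) at Lipschitz scale $\sigma/(ML)$, getting $ML^2\delta\sigma^{n-2}$ per shell. They then take a geometric sum for $n\ge3$, and for $n=2$ a logarithmic sum plus a trivially bounded small ball. The diagonalisation/completing-the-square paragraph is never used. Letting $K\to\infty$ for $n\ge3$ is a harmless simplification of the paper's removal of $B^n(q,r_0)$. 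Two points need fixing.

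\textbf{The ``main obstacle'' is a one-liner.} By minimality $|Df(x)|\ge|Df(x_*)|$, so $2|Df(x)|\ge|Df(x)-Df(x_*)|\ge|x-x_*|/(2L)$. This is exactly the paper's ``by the minimality of $|Df(q)|$''. Your first-order route can be completed: write $Df(x)=Df(x_*)+H(x-x_*)+(\bar H-H)(x-x_*)$ with $H=D^2f(x_*)$, and use $Df(x_*)\cdot H(x-x_*)\ge0$ to get $|Df(x_*)+H(x-x_*)|\ge|x-x_*|/L$. However, it needs $|Df|^2$ to be differentiable at $x_*$. Since $x_*$ may lie on $\partial\Omega$, where $D^2f$ need not exist, this route is strictly worse.

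\textbf{The second constraint in \eqref{eqn:delta_upper_bound} is used backwards.} The condition $\delta<ML^2/\eps_0^2$ does not give $\sqrt{\delta ML^2}\le\eps_0$. If that inequality fails, you simply bound the cube's volume by $\eps_0^2<\delta ML^2$; this is the paper's case $r_0\ge\eps_0$. What the condition actually gives is $2^K\sim\eps_0/\sqrt{\delta ML^2}<1/\delta$, i.e.\ $K\lesssim|\log\delta|$, which is precisely what your $n=2$ sum needs. Your claim $K\sim|\log\delta|$ silently treats $\eps_0$, $M$, $L$ as $O(1)$. Without this constraint $K$ is not controlled by $|\log\delta|$, and the $n=2$ bound genuinely fails. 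For example, take $f=\frac\lambda2(x_1^2-x_2^2)$ on $(-1,1)^2$ with $\lambda\gg1/\delta$: the level set has measure $\sim(\delta/\lambda)\log(\lambda/\delta)$, while the right-hand side is only $\sim(\delta/\lambda)|\log\delta|$.
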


\begin{rmk}
    (i) A common situation is when $\diam\,\Omega \gtrsim 1$, $\|D^2 f\|_\infty\gtrsim 1$, $\|(D^2 f)^{-1}\|_\infty\gtrsim 1$ and $D^2f$ is Lipschitz with norm $\gtrsim 1$. Then one can take $\eps_0^{-1}\sim \|(D^2 f)^{-1}\|_\infty \|D^2 f\|_{\mathrm{Lip}}$, whence the right-hand side of \eqref{eqn:hyperbolic_main_estimate} can be taken to be
    \begin{equation}\label{eqn:26-06-11}
        \|D^2 f\|^2_{\mathrm{Lip}}\|D^2 f\|_\infty\|(D^2 f)^{-1}\|_\infty^4(\diam\,\Omega)^n m(\delta).
    \end{equation}
    When $m(\delta)=\delta$, one can check that the term \eqref{eqn:26-06-11} is invariant with respect to rescalings $\lambda f(\mu x)$ for $\lambda,\mu>0$.\\
    (ii) If we have a uniform lower bound $|\det D^2 f|\gtrsim 1$ on $\Omega$, then by the adjugate matrix formula, the right-hand side of \eqref{eqn:hyperbolic_main_estimate} can be taken to be
    \begin{equation}
    \begin{aligned}
        &\eps_0^{-2}\|D^2 f\|^{2n-1}_\infty(\diam\,\Omega)^n m(\delta).
    \end{aligned}
    \end{equation}\\
    (iii) The factor $|\log \delta|$ is necessary when $n=2$, as can be seen from the case $\Omega=(-1,1)^2$ and 
    $$f(x)=x_1^2-x_2^2,\;\;c=0,\;\;\delta\rightarrow0.$$ 
\end{rmk}

\begin{proof}[Proof of Theorem \ref{thm:hyperbolic}]
By a translation we may assume $c=0$. By the inverse function theorem, $Df$ restricted to any cube $Q_0$ of side length $\eps_0$ is invertible. Cover $\Omega$ by a grid consisting of at most $O(\eps_0^{-1}\mathrm{diam}\, \Omega)^n$ such cubes $Q_0$. It then suffices to show that for each fixed $Q_0$, we have
\begin{equation*}
\begin{aligned}
    &\left|\{x\in Q_0\cap \Omega: 0\le  f(x)\le   \delta\}\right|  \\
    \lesssim &\|D^2 f\|_\infty\|(D^2 f)^{-1}\|_\infty^2\, \eps_0^{n-2} m(\delta).
\end{aligned}    
\end{equation*}
Denote by $q\in Q$ a minimiser of $|Df(q)|$, and denote 
\begin{equation}\label{eqn:defn_r_0}
    r_0:=\big(\|D^2 f\|_\infty\|(D^2 f)^{-1}\|_\infty^2\, \eps_0^{n-2}m(\delta)\big)^{1/n},
\end{equation}
so that
\begin{align*}
    |B^n(q,r_0)|\sim r_0^n=  \|D^2 f\|_\infty\|(D^2 f)^{-1}\|_\infty^2\, \eps_0^{n-2}m(\delta).
\end{align*}
If $r_0\ge \eps_0$ then we are done. If $r_0\le \eps_0$, it then suffices to show that
\begin{equation*}
\begin{aligned}
    &\left|\{x\in Q_0\cap \Omega: |x-q|\ge r_0,\,0\le  f(x)\le   \delta\}\right| \\
    \lesssim &\|D^2 f\|_\infty\|(D^2 f)^{-1}\|_\infty^2 \,\eps_0^{n-2} m(\delta).
\end{aligned}    
\end{equation*}
To this end, we perform a dyadic decomposition according to $\sigma\le |x-q|\le 2\sigma$ for dyadic numbers $\sigma$ essentially between $r_0$ and $\eps_0$. It then suffices to estimate for each $\sigma$
\begin{equation}\label{eqn:sum_over_sigma}
     \sum_{\stackrel{\sigma \,\,\text{dyadic}}{r_0\lesssim \sigma \lesssim \eps_0}}\left|\{x\in Q_0\cap \Omega: |x-q|\in [\sigma,2\sigma],\,0\le  f(x)\le   \delta\}\right|.
\end{equation}
Now fix $\sigma$. Since the set $\{x\in Q_0\cap \Omega: |x-q|\sim \sigma\}$ can be covered by $O(1)$ many cubes $Q$ of side length $\sim\sigma$, it then suffice to estimate for each $Q$ the quantity
\begin{equation*}
    |\{x\in Q\cap \Omega:0\le  f(x)\le   \delta\}|.
\end{equation*}

To this end, we apply a rescaling. Since $\Omega$ is convex, so is $Q\cap \Omega$. Denote by $\lambda_Q$ an affine bijection from $[-1,1]^2$ to $Q$, and denote $g=f\circ \lambda_Q$. Invoking the invertibility of $Df$, when $|x-q|\sim 
\sigma$, we have $|Df(x)-Df(q)|\gtrsim \|(D^2 f)^{-1}\|^{-1}_\infty \sigma$; in particular, $|Df(x)|\gtrsim \|(D^2 f)^{-1}\|^{-1}_\infty\sigma$ by the minimality of $|Df(q)|$. This means that $|Dg|\gtrsim \|(D^2 f)^{-1}\|^{-1}_\infty\sigma^2$ on the convex subset $[-1,1]^2\cap \lambda_Q^{-1}(\Omega)$. Also, $\|Dg\|_{\mathrm{Lip}}\lesssim \sigma^2 \|D^2 f\|_\infty$. Applying Proposition \ref{prop:gradient_lower_bound} to $g$ with an obvious rescaling, we have
\begin{equation*}
    \left|\{x\in [-1,1]^2:0\le  g(x)\le   \delta\}\right|  \lesssim \|D^2 f\|_\infty \|(D^2 f)^{-1}\|_\infty^2\sigma^{-2}\delta.
\end{equation*}
Rescaling back, we obtain
\begin{equation*}
    |\{x\in Q\cap \Omega:0\le  f(x)\le   \delta\}|\lesssim \|D^2 f\|_\infty \|(D^2 f)^{-1}\|_\infty^2\sigma^{n-2}\delta.
\end{equation*}
We can bound the sum in \eqref{eqn:sum_over_sigma} above by a dimensional constant times
\begin{align*}
    &\sum_{\stackrel{\sigma \,\,\text{dyadic}}{r_0\lesssim \sigma \lesssim \eps_0}}\|D^2 f\|_\infty \|(D^2 f)^{-1}\|_\infty^2\sigma^{n-2}\delta\\
    \lesssim & 
    \begin{cases}
        \|D^2 f\|_\infty\|(D^2 f)^{-1}\|_\infty^2 \delta \eps_0^{n-2},\quad & n\ge 3\\
        \|D^2 f\|_\infty\|(D^2 f)^{-1}\|_\infty^2 \delta \log (\eps_0/r_0),\quad &n=2.
    \end{cases}.
\end{align*}
Note that when $n=2$, the choices of $r_0$ in \eqref{eqn:defn_r_0} and $\delta$ in \eqref{eqn:delta_upper_bound} allow us to conclude that $\log (\eps_0/r_0)\le |\log \delta|$. Summing over cubes $Q_0$, we obtain the required bound. 
\end{proof}

%%%%%%%%%%%%%%%%%%%%%%%%%%%%%%%%%%%%%%%%%%%%%%%%%%%%

\section{An application to oscillatory integrals}
\label{sec:application}
In this section, as a direct application of Theorem \ref{thm:elliptic}, we derive the following oscillatory integral estimate. 

\begin{cor}\label{cor:oscillatory}
Let $\Omega$ and $f$ be as in Theorem \ref{thm:elliptic}. For $\psi \in L^2(\mathbb{R})$, define 
$$
I_\psi(\lambda)=\int_{\Omega} e^{i \lambda f(x)} \psi(f(x)) d x, \quad \lambda \in \mathbb{R} .
$$
Then we have
$$
\left(\int_\R|I_\psi(\lambda)|^2 d \lambda\right)^{1/2} \le  \sqrt{2\pi}\,\frac{C_\Omega}{a}\|\psi\|_{L^2(\mathbb{R})}, 
$$
where $C_\Omega$ is as in Theorem \ref{thm:elliptic}.
\end{cor}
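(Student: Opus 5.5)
The plan is to write $I_\psi$ as the Fourier transform of a one-dimensional density and then apply Plancherel's theorem. Let $\mu$ be the pushforward of Lebesgue measure on $\Omega$ under $f$, so that $\mu(E)=|\{x\in\Omega: f(x)\in E\}|$ for Borel $E\subset\R$. Theorem \ref{thm:elliptic} gives $\mu([c,c+\delta))\le C_\Omega\delta/a$ for every $c\in\R$ and $\delta>0$. Every open set in $\R$ is a countable disjoint union of half-open intervals, so outer regularity yields $\mu(E)\le (C_\Omega/a)\,|E|$ for all Borel $E$. Hence $\mu$ is absolutely continuous with density $h=d\mu/dt$ satisfying $0\le h\le C_\Omega/a$ almost everywhere. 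We also have $\int h=|\Omega|<\infty$.

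By the change of variables formula for pushforward measures,
\[
I_\psi(\lambda)=\int_\R e^{i\lambda t}\psi(t)h(t)\,dt .
\]
This integral is absolutely convergent, since $\psi h\in L^2$ (because $h$ is bounded) and $\psi h\in L^1$ (by Cauchy--Schwarz, $\|\psi h\|_{L^1}\le\|\psi\|_{L^2}\|h\|_{L^2}$ with $\|h\|_{L^2}^2\le (C_\Omega/a)|\Omega|$). Thus $I_\psi(\lambda)=\widehat{\psi h}(-\lambda)$, with the convention $\hat g(\xi)=\int g(t)e^{-i\xi t}\,dt$. Plancherel's theorem with this convention gives
\[
\int_\R|I_\psi(\lambda)|^2\,d\lambda=2\pi\|\psi h\|_{L^2}^2\le 2\pi\frac{C_\Omega^2}{a^2}\|\psi\|_{L^2}^2 .
\]
Taking square roots gives the claimed bound, with constant $\sqrt{2\pi}\,C_\Omega/a$.

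The only step needing care is the passage from the interval bound to the density bound $h\le C_\Omega/a$, that is, the absolute continuity of $\mu$ with an $L^\infty$ density. Outer regularity of the finite Borel measure $\mu$ combined with countable additivity over intervals handles it. The rest is routine, including the justification that $\psi h\in L^1\cap L^2$, so that the Fourier transform is defined pointwise and agrees with the Plancherel extension.
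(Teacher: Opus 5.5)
Your proposal is correct and takes the same approach as the paper. The paper writes $I_\psi$ as $|\Omega|$ times the Fourier transform of $\psi p$, where $p\le C_\Omega/(a|\Omega|)$ is the density of $f$ under normalised measure; this is exactly your $h=|\Omega|\,p$, and both arguments then conclude by Plancherel. Your write-up also justifies the step from the interval bound of Theorem~\ref{thm:elliptic} to the $L^\infty$ density bound, which the paper asserts without comment. You also state the Plancherel identity correctly as $\|I_\psi\|_{L^2}=\sqrt{2\pi}\,\|\psi h\|_{L^2}$, whereas the paper's displayed identity carries a stray square on $\|\psi p\|_{L^2}$.
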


\begin{proof}
For convenience, we regard \(\Omega\) as a probability space equipped with the probability measure \(\frac{1}{|\Omega|} dx\), and regard
\[
\xi = f(x), \quad x \in \Omega
\]
as a random variable. 
It follows from Theorem \ref{thm:elliptic} that \(\xi\) has a probability density function \(p(\xi)\) that satisfies
\begin{equation}
\label{eq:p-bound}
0\le p(\xi) \leq \frac{1}{|\Omega|}\frac{C_\Omega}{a}, \quad \xi \in \mathbb{R},
\end{equation}
where $C_\Omega$ is as in Theorem \ref{thm:elliptic}. 
Therefore, we can rewrite
\[
I_\psi(\lambda) = |\Omega|\cdot\mathbb{E}\big[ e^{i\lambda\xi} \psi(\xi) \big]
= |\Omega| \int_\R e^{i\lambda\xi} \psi(\xi) p(\xi) \, d\xi.
\]
Notice that the last integral is the (inverse) Fourier transform of \(\psi p\). Thus, by Plancherel's theorem, we have 
\[
\left( \int_\R |I_\psi(\lambda)|^2 \, d\lambda \right)^{\frac{1}{2}} = |\Omega| \sqrt{2\pi} \| \psi  p\|_{L^2}^2.
\]
Combining this with \eqref{eq:p-bound}, we obtain
\[
\| I_\psi \|_{L^2(\mathbb{R})}\leq 
\sqrt{2\pi}\,\frac{C_\Omega}{a} \|\psi \|_{L^2(\mathbb{R})},
\]
as desired. 
\end{proof}

\begin{rmk}
Similar estimates can be derived for $\Omega$ and $f$ satisfying the conditions of Proposition \ref{prop:gradient_lower_bound} and Theorem \ref{thm:hyperbolic} (with $n\ge 3$). 
\end{rmk}
%%%%%%%%%%%%%%%%%%%%%%%%%%%%%%%%%%%%%%%%%%%%%%%%%%%%

\bibliographystyle{alpha}
\bibliography{ref}

\end{document}